\documentclass[11pt]{amsart}
\usepackage{xcolor,mathtools}

\include{package}
\definecolor{grn}{rgb}{0,0.6,0}
\definecolor{mrn}{rgb}{0.3,0,0}
\definecolor{blue}{rgb}{0,0,0.7}
\definecolor{Mygray}{rgb}{0.75,0.75,0.75}
\definecolor{auburn}{rgb}{0.43, 0.21, 0.1}
\definecolor{britishracinggreen}{rgb}{0.0, 0.26, 0.15}
\definecolor{taupe}{rgb}{0.28, 0.24, 0.2}
\usepackage{amsmath,amssymb}
\setlength{\textwidth}{16.cm}
\setlength{\oddsidemargin}{0cm}
\setlength{\evensidemargin}{0cm}
\setlength{\topmargin}{0cm}
\setlength{\headheight}{0.5cm}
\setlength{\headsep}{0.5cm}
\setlength{\topskip}{0cm}
\setlength{\textheight}{21cm}
\setlength{\footskip}{.5cm}

\newtheorem{theorem}{Theorem}[section]

\newtheorem{propn}{Proposition}[section]

\newtheorem{lemma}{Lemma}[section]
\newtheorem{defn}{Definition}[section]

\newtheorem{rmk}{Remark}[section]

\usepackage{latexsym,amssymb}
\usepackage{amssymb}
\usepackage{graphicx}
\usepackage{ textcomp }
\usepackage[left=2cm,right=2cm,top=2cm,bottom=2cm]{geometry}

\parskip=5pt

\begin{document}
\baselineskip=14.5pt
\title[Consecutive number fields of small degree with large P\'{o}lya groups]{Large P\'{o}lya groups in simplest cubic fields and consecutive bi-qaudratic fields}

\author{Md. Imdadul Islam, Jaitra Chattopadhyay and Debopam Chakraborty}
\address[Md. Imdadul Islam and Debopam Chakraborty]{Department of Mathematics, BITS-Pilani, Hyderabad campus, Hyderabad, INDIA}
\address[Jaitra Chattopadhyay]{Department of Mathematics, Siksha Bhavana, Visva-Bharati, Santiniketan - 731235, West Bengal, India}

\email[Md. Imdadul Islam]{p20200059@hyderabad.bits-pilani.ac.in}

\email[Jaitra Chattopadhyay]{jaitra.chattopadhyay@visva-bharati.ac.in; chat.jaitra@gmail.com} 

\email[Debopam Chakraborty]{debopam@hyderabad.bits-pilani.ac.in}

\begin{abstract}
The P\'{o}lya group of an algebraic number field is the subgroup generated by the ideal classes of the products of prime ideals of equal norm inside the ideal class group. Inspired by a recent work on consecutive quadratic fields with large class numbers by Cherubini et al., we extend the notion of {\it consecutiveness} of number fields to certain parametric families of cyclic cubic fields and bi-quadratic fields and address the question of the existence of infinitely many such consecutive fields with large P\'{o}lya groups. This extends a recent result of the second author and Saikia for totally real bi-quadratic fields. 
\end{abstract}

\renewcommand{\thefootnote}{}

\footnote{2020 \emph{Mathematics Subject Classification}: Primary 11R29, Secondary 11R11.}

\footnote{\emph{Key words and phrases}: P\'{o}lya fields, P\'{o}lya groups, Galois cohomology, square-free values.}

\footnote{\emph{We confirm that all the data are included in the article.}}

\renewcommand{\thefootnote}{\arabic{footnote}}
\setcounter{footnote}{0}

\maketitle

\section{Introduction}

Let $K$ be an algebraic number field with ring of integers $\mathcal{O}_{K}$, discriminant $d_{K}$, ideal class group $Cl_{K}$ and class number $h_{K}$. There has been much study on the divisibility and indivisibility properties of class numbers of quadratic fields after Ankeny and Chowla's \cite{AC} seminal work on the existence of infinitely many imaginary quadratic fields with class numbers divisible by any given integer. Inspired by a recent work of Iizuka \cite{iizuka}, similar questions for the class numbers of {\it consecutive quadratic fields} gained a lot of attention. In the present article, we consider a few allied questions regarding the {\it P\'{o}lya group} $Po(K)$ of $K$ whose definition we recall as follows. 

\begin{defn} (cf. \cite{cahen-chabert-book}, \textsection II.4, \cite{polya})
The P\'{o}lya group of $K$ is defined to be the subgroup generated by the ideal classes $[\Pi_{q}(K)]$ inside $Cl_{K}$, where 
\begin{equation}
\displaystyle\Pi_{q}(K) = \left\{\begin{array}{ll}
\displaystyle\prod_{\substack {\mathfrak{p} \in {\rm{Spec}}(\mathcal{O}_{K})\\ N_{K/\mathbb{Q}} \mathfrak{p} = q}}\mathfrak{p} &\mbox{; if } \mathcal{O}_{K} \mbox{ has primes } \mathfrak{p} \mbox{ of norm } q,\\
\mathcal{O}_{K} &\mbox{; otherwise}.
\end{array}\right.
\end{equation}
\end{defn}

The number field $K$ is said to be a P\'{o}lya field if $Po(K)$ is trivial. Unlike the notoriously difficult problem of the determination of number fields $K$ with $h_{K} = 1$, the characterization of P\'{o}lya fields of small degree is not elusive. Quadratic P\'{o}lya fields have been characterized by Zantema in \cite{zantema} and much later, Leriche \cite[Proposition 3.2]{leriche} classified the cyclic cubic P\'{o}lya fields. In the same paper, Leriche classified cyclic quartic P\'{o}lya fields \cite[Theorem 4.4]{leriche} and bi-quadratic P\'{o}lya fields that are compositum of two quadratic P\'{o}lya fields \cite[Theorem 5.1]{leriche}.

\smallskip

Even though there are plenty of number fields $K$ with trivial $Po(K)$, the size of $Po(K)$ can be arbitrarily large as well. In fact, from \cite[Proposition 1.4]{cahen-chabert-book}, this immediately follows for quadratic fields. The analogous result for totally real bi-quadratic fields have been proven by the second author and Saikia in \cite{self-rnt}. Motivated by the work in \cite{imrn}, they recently considered consecutive quadratic fields with large P\'{o}lya groups (cf. \cite[Theorem 5.4]{self-survey}).

\smallskip

There have been quite a lot of works on the behaviour of $Po(K)$ when $K$ is a bi-quadratic field (cf. \cite{self-rnt}, \cite{self-arxiv}, \cite{rajaei-jnt}, \cite{rajaei}, \cite{maref-2}, \cite{maref-raja}, \cite{recent-jnt}). In these works, mostly bi-quadratic fields $K$ with $Po(K) \simeq \mathbb{Z}/2\mathbb{Z}$ have been shown to exist. In this article, we consider two bi-quadratic fields at a time and study the nature of their P\'{o}lya groups. For non-square integers $m$ and $n$, we let $K_{m,n} := \mathbb{Q}(\sqrt{m},\sqrt{n})$. We call two bi-quadratic fields {\it consecutive} if they are of the form $K_{m,n}$ and $K_{m,n + 1}$ for some integers $m$ and $n$. Our result on consecutive bi-quadratic fields is the following.

\begin{theorem}\label{bq-dji}
Let $p$ and $q$ be two odd primes such that $p = 2q + 1$ and let $r \geq 1$ be an even integer. Then there exist infinitely many consecutive bi-quadratic fields $K_{m,p - 1}$ and $K_{m,p}$ such that $Po(K_{m,p - 1}) \simeq Po(K_{m,p}) \simeq (\mathbb{Z}/2\mathbb{Z})^{r}$. 
\end{theorem}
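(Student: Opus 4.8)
The plan is to first replace the computation of $Po(K_{m,n})$ by ramification- and unit-theoretic data. For a biquadratic field $K$ with quadratic subfields $k_{1},k_{2},k_{3}$, the Pólya group is an elementary abelian $2$-group, and (in the spirit of \cite{rajaei-jnt} and \cite{self-rnt}) its rank is expressible through the number $t$ of rational primes ramifying in $K/\mathbb{Q}$, the Pólya groups $Po(k_{i})$ of the three subfields, and the Hasse unit index $q(K)=[\mathcal{O}_{K}^{\times}:\mathcal{O}_{k_{1}}^{\times}\mathcal{O}_{k_{2}}^{\times}\mathcal{O}_{k_{3}}^{\times}]\in\{1,2,4,8\}$. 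My first step is to record this rank formula precisely, together with the fact that a real quadratic field $\mathbb{Q}(\sqrt{d})$ with $t_{d}$ ramified primes has $Po(\mathbb{Q}(\sqrt{d}))$ elementary abelian of rank $t_{d}-1$ or $t_{d}-2$ according as the fundamental unit has norm $+1$ or $-1$. This reduces the theorem to a bookkeeping problem about ramified primes and unit norms.

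The second step is to use the hypothesis $p=2q+1$ to force a symmetry between the two consecutive fields. Because $q$ is an odd prime, $p-1=2q$ has exactly the two prime divisors $2$ and $q$; and since $p=2q+1\equiv 3\pmod{4}$, the prime $2$ also ramifies in $\mathbb{Q}(\sqrt{p})$. Consequently both ``second-coordinate'' fields $\mathbb{Q}(\sqrt{p-1})=\mathbb{Q}(\sqrt{2q})$ and $\mathbb{Q}(\sqrt{p})$ have exactly two ramified primes. I would then check that, for $m\equiv 1\pmod 4$ coprime to $2pq$, the two biquadratic fields $K_{m,p-1}$ and $K_{m,p}$ have the \emph{same} number of ramified primes: the ramified primes of $K_{m,p-1}$ are $\{2,q\}\cup\{\ell:\ell\mid m\}$ and those of $K_{m,p}$ are $\{2,p\}\cup\{\ell:\ell\mid m\}$, so $t$ is identical for both. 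This common value of $t$ is precisely the leverage that lets the two Pólya groups have equal rank.

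Third, I would take $m=\ell_{1}\cdots\ell_{r/2}$ to be a squarefree product of $r/2$ distinct primes, each $\equiv 1\pmod 4$, coprime to $2pq$, and subject to prescribed quadratic-residue (splitting) conditions relative to $2$, $q$ and $p$. With $m\equiv 1\pmod4$ the subfield $\mathbb{Q}(\sqrt{m})$ has odd discriminant, so $t=r/2+2$ for both fields, and $m$, $2qm$, $pm$ are automatically squarefree, making all three quadratic subfields genuine. Feeding $t=r/2+2$, the matched subfield data from the previous step, and the values of $q(K_{m,p-1})$ and $q(K_{m,p})$ forced by the congruences into the rank formula, I expect both ranks to equal $r$. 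The evenness hypothesis on $r$ enters here: each prime factor $\ell_{i}$ of $m$ ramifies in exactly two of the three quadratic subfields and contributes two independent generators, so the construction intrinsically produces groups of even rank.

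Finally, the congruence and splitting conditions on $\ell_{1},\dots,\ell_{r/2}$ amount to fixing their residues modulo $8pq$; by Dirichlet's theorem on primes in arithmetic progressions each $\ell_{i}$ can be chosen in infinitely many ways, and holding all but one fixed while letting the last vary yields infinitely many distinct squarefree $m$, hence infinitely many genuinely distinct consecutive pairs $(K_{m,p-1},K_{m,p})$. I expect the main obstacle to be the simultaneous control of the unit indices $q(K_{m,p-1})$ and $q(K_{m,p})$: this index ranges over $1,2,4,8$ and directly shifts the rank, so the conditions on the $\ell_{i}$ must be chosen to pin both indices to the same correct value while leaving the subfield Pólya ranks undisturbed. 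Verifying that these constraints are mutually compatible and non-vacuous — so that $Po(K_{m,p-1})\cong Po(K_{m,p})\cong(\mathbb{Z}/2\mathbb{Z})^{r}$ actually holds, and holds infinitely often — is the technical heart of the argument.
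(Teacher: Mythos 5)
Your overall strategy---reduce $Po(K_{m,n})$ to ramification data plus unit-theoretic corrections, then choose the prime factors of $m$ by congruence conditions and invoke Dirichlet---is the same general framework the paper uses (via Zantema's exact sequence $0 \to H^{1}(G,\mathcal{O}_{K}^{*}) \to \bigoplus_{i}\mathbb{Z}/e_{i}\mathbb{Z} \to Po(K) \to 0$ and his Lemma 4.3 computing $H^{1}[2]$ from the discriminants and the norms $N(u_{i}+1)$). But there is a concrete quantitative error that sinks the plan as stated: you take $m$ to be a product of $r/2$ primes and claim the resulting rank is $r$, on the grounds that ``each prime factor of $m$ ramifies in exactly two of the three quadratic subfields and contributes two independent generators.'' This cannot happen. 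Each ramified rational prime contributes exactly one $\mathbb{Z}/2\mathbb{Z}$ summand to the middle term of Zantema's sequence (its ramification index in the biquadratic field is $2$), so with $s = r/2+2$ ramified primes the rank of $Po(K)$ is at most $s - \operatorname{rank} H^{1}(G,\mathcal{O}_{K}^{*})$; and $H^{1}$ already contains the independent classes $[m]$ and $[p]$ coming from the subfield discriminants, so the rank is at most $r/2$, strictly less than $r$ for every $r \geq 2$. The correct bookkeeping (as in the paper) requires $m$ to have $r+1$ prime factors: with $t$ odd, $m = r_{1}\cdots r_{t}$ gives $t+2$ ramified primes, $H^{1} \cong (\mathbb{Z}/2\mathbb{Z})^{3}$, and hence $Po(K_{m,p}) \cong (\mathbb{Z}/2\mathbb{Z})^{t-1}$ with $r = t-1$ even. (Relatedly, your recalled rank formula for real quadratic fields is reversed: the rank is $t_{d}-1$ when the fundamental unit has norm $-1$ and $t_{d}-2$ when it has norm $+1$.)

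Beyond the miscounting, the step you yourself identify as ``the technical heart''---pinning down the unit contribution for both fields simultaneously---is precisely where all the actual work lies, and your proposal offers no mechanism for it. The paper resolves this by imposing $r_{i} \equiv 1 \pmod{8pq}$ and $\left(\frac{r_{i}}{r_{j}}\right) = -1$ for $i \neq j$: Trotter's theorem then forces $N(\varepsilon_{m}) = -1$ (killing the contribution of $\mathbb{Q}(\sqrt{m})$), Lemma 2.4 of the paper handles $\mathbb{Q}(\sqrt{p})$, and an explicit factorization of the Pell equation $z^{2}-t^{2}mp=1$ combined with the Legendre-symbol conditions shows $[N(u_{mp}+1)] \in \langle [2],[m],[p]\rangle$, so that $H^{1}$ has rank exactly $3$. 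Your conditions ``$\ell_{i} \equiv 1 \pmod 4$, coprime to $2pq$, with prescribed splitting conditions'' are not specific enough to run any of this, and the mutual non-residuosity condition (which is what makes the Pell-equation argument close) is absent. So the proposal is a plausible outline of the right \emph{type} of argument, but with the wrong arithmetic of ranks and without the ingredients needed to control $H^{1}$.
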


\begin{rmk}
Our method of proof yields infinitely many choices of the integer $m$ to prove the infinitude of the family ascertained in Theorem \ref{bq-dji}. Since primes of the form $p$ and $q = 2p + 1$, widely known as {\it Sophie-Germain primes}, are conjecturally known to exist in infinite number, Theorem \ref{bq-dji} also conjecturally provides infinitely many pairs $(K_{m,p - 1}, K_{m,p})$ for a fixed value of $m$.
\end{rmk}
Surprisingly, there are not many results in the literature concerning the P\'{o}lya groups of cubic fields. However, some parametric families of cyclic cubic fields have been well studied (cf. \cite{balady}). Among all these, Shank's family of simplest cubic fields \cite{shanks} is the most studied and is defined by the splitting field $K_{n}$ of the irreducible polynomial $f_{n}(X) = X^{3} + (n + 3)X^{2} + nX - 1$ over $\mathbb{Q}$. Our result on large P\'{o}lya groups for this family is the following.
\begin{theorem}\label{main-cubic}
Let $M > 0$ be a real number. Then there exist infinitely many simplest cubic fields $K_{n}$ such that $|Po(K_{n})| > M$.
\end{theorem}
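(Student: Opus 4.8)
The plan is to reduce the theorem to a purely ramification-theoretic statement — namely that arbitrarily many rational primes can be made to ramify in a simplest cubic field — and then to realize this by forcing $\Delta_{n} := n^{2}+3n+9$ to be squarefree with many prime factors. First I would pin down $Po(K_{n})$ in terms of ramification. For a cyclic cubic field $K$, every ramified prime $p$ is \emph{totally} ramified, say $p\mathcal{O}_{K} = \mathfrak{p}^{3}$, so $\Pi_{p}(K) = \mathfrak{p}$ with $[\mathfrak{p}]^{3} = [(p)] = 0$; for an unramified prime $q$ the ideal $\Pi_{q}(K)$ is either $(q)$ (when $q$ splits completely) or $\mathcal{O}_{K}$ (when $q$ is inert, there being no prime of norm $q$), hence trivial in $Cl_{K}$. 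Thus $Po(K)$ is exactly the group generated by the classes $[\mathfrak{p}]$ of the totally ramified primes, i.e. the strongly ambiguous ideal classes. Feeding this into the $\mathrm{Gal}(K/\mathbb{Q})$-cohomology of $\mathcal{O}_{K}^{\times}$ and the ambiguous class number formula — using that the simplest cubic fields are totally real, so that $-1 = N_{K/\mathbb{Q}}(-1)$ is a norm and the rational unit contribution is trivial — yields $Po(K) \cong (\mathbb{Z}/3\mathbb{Z})^{t-1}$, where $t$ is the number of ramified primes. (This is consistent with Leriche's classification, in which $t = 1$ characterises the cyclic cubic P\'{o}lya fields.) Consequently it suffices to produce infinitely many $n$ with $t = t(n) \to \infty$.

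Next I would make the ramification in $K_{n}$ explicit. A direct computation gives $\mathrm{disc}(f_{n}) = \Delta_{n}^{2}$ with $\Delta_{n} = n^{2}+3n+9$, and since $4\Delta_{n} = (2n+3)^{2} + 27$, every prime divisor $p \neq 3$ of $\Delta_{n}$ satisfies $p \equiv 1 \pmod{3}$, while $2 \nmid \Delta_{n}$ always and $3 \nmid \Delta_{n}$ whenever $3 \nmid n$. For such a prime $p \mid \Delta_{n}$ one checks that $f_{n}$ reduces as $f_{n} \equiv (X-a)^{3} \pmod{p}$ with $a \equiv -(n+3)/3$: using $n^{2}+3n+9 \equiv 0 \pmod{p}$ one gets $n^{3} \equiv 27$ and $(n+3)^{3} \equiv -27 \pmod{p}$, matching the coefficients of $(X-a)^{3}$. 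When moreover $\Delta_{n}$ is squarefree, Dedekind's criterion shows $p \nmid [\mathcal{O}_{K} : \mathbb{Z}[\rho]]$ for every $p \mid \Delta_{n}$ (the repeated-root contribution to the discriminant is exactly $v_{p}(\Delta_{n}) = 1$), so $\mathcal{O}_{K} = \mathbb{Z}[\rho]$, $d_{K_{n}} = \Delta_{n}^{2}$, and every prime divisor of $\Delta_{n}$ is totally ramified. Hence, for squarefree $\Delta_{n}$ with $3 \nmid n$, one has $t(n) = \omega(\Delta_{n})$, the number of distinct prime divisors of $\Delta_{n}$.

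Finally I would carry out the construction. Given $M$, choose $k$ with $3^{k-1} > M$ and distinct primes $p_{1},\dots,p_{k} \equiv 1 \pmod{3}$. For each $p_{i}$ the quadratic $x^{2}+3x+9$ has a simple root modulo $p_{i}$; lifting it to a residue modulo $p_{i}^{2}$ at which $\Delta_{n} \not\equiv 0 \pmod{p_{i}^{2}}$, and imposing $n \not\equiv 0 \pmod{3}$, the Chinese Remainder Theorem produces an arithmetic progression of $n$ along which $p_{i} \,\|\, \Delta_{n}$ for every $i$ and $3 \nmid n$. Within this progression I would invoke a squarefree-values theorem for the quadratic $\Delta_{n}$ to extract infinitely many $n$ with $\Delta_{n}$ squarefree; for these $t(n) = \omega(\Delta_{n}) \geq k$, whence $|Po(K_{n})| = 3^{t(n)-1} \geq 3^{k-1} > M$. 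The main obstacle is exactly this last analytic input: guaranteeing infinitely many (indeed a positive proportion of) squarefree values of $n^{2}+3n+9$ inside a prescribed congruence class, which requires verifying that the imposed conditions create no hidden local obstruction — they fix only $v_{p_{i}} = 1$ at the chosen primes and leave $\Delta_{n}$ free to be squarefree elsewhere. The algebraic steps (the P\'{o}lya group formula and the clean ramification picture for squarefree $\Delta_{n}$) are comparatively routine once this sieve is in place.
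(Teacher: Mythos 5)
Your proposal is correct and follows the same overall skeleton as the paper: reduce $|Po(K_{n})|$ to $3^{t-1}$ (Leriche's formula for cyclic extensions of prime degree, quoted as Proposition 2.3/2.6 in the paper), force $t$ prescribed primes $\equiv 1 \pmod 3$ to divide $\Delta_{n} = n^{2}+3n+9$ by solving congruences, and then invoke a squarefree-values result so that these primes really are ramified and $d_{K_{n}} = \Delta_{n}^{2}$. The genuine difference lies in the analytic input. The paper restricts the parameter to \emph{prime} values $p \equiv x_{0} \pmod {p_{1}\cdots p_{t}}$ obtained from Dirichlet's theorem, and therefore needs an asymptotic count of primes in an arithmetic progression at which $h(p)=p^{2}+3p+9$ is squarefree; this is the formula \eqref{pasten-pasten-pasten}, communicated by Pasten and resting on his prime-argument sieve, and it is the technical heart of their proof (they also note it is what blocks them from handling consecutive cubic fields). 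You instead let $n$ range over \emph{all} integers in a progression modulo $3\prod_{i} p_{i}^{2}$, using the Hensel lift of the simple root of $h$ modulo $p_{i}$ to force $p_{i}\,\|\,\Delta_{n}$ exactly, so that only the classical Estermann/Nagell-type theorem on squarefree values of an irreducible quadratic at integer arguments in a congruence class is required; the absence of local obstructions is exactly the check you flag ($3\nmid\Delta_{n}$ when $3\nmid n$, $v_{p_{i}}(\Delta_{n})=1$ by construction, and $\rho(q^{2})\le 2 < q^{2}$ elsewhere). Your route is analytically lighter and essentially self-contained, at the cost of not producing prime parameters; the paper's route buys the extra information that $n$ can be taken prime. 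You also supply the algebraic details the paper takes as known — that every prime $p\mid\Delta_{n}$ with $\Delta_{n}$ squarefree is totally ramified, via $4\Delta_{n}=(2n+3)^{2}+27$, the factorization $f_{n}\equiv (X-a)^{3}\pmod p$ with $a\equiv -(n+3)/3$, and Dedekind's criterion — and these computations check out.
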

\begin{rmk}
Since the P\'{o}lya group of a number field is a subgroup of the ideal class group, it readily follows from Theorem \ref{main-cubic} that Shank's family of simplest cubic fields contains infinitely many fields whose class groups have arbitrarily large $3$-ranks.
\end{rmk}
\begin{rmk}
In \cite{balady}, Balady considered a general class of cyclic cubic fields and proved that Shank's family can be obtained as a particular case of the general family. The techniques of proving Theorem \ref{main-cubic} will work for other parametric families of cyclic cubic fields mentioned in \cite{balady} as well.
\end{rmk}
\begin{rmk}
We can declare two simplest cubic fields to be consecutive if they are of the form $K_{n - 1}$ and $K_{n}$ for some integer $n \geq 2$. Our method to prove Theorem \ref{main-cubic} does not allow us to conclude anything regarding large P\'{o}lya groups of consecutive cubic fields. We shall make use of the density of primes in an arithmetic progression such that certain quadratic polynomials produce square-free values at those primes. In most cases, the density turns out to be smaller than $\frac{1}{2}$ and therefore we are unable to infer whether there exist a common set of primes for which more than one quadratic polynomial produce square-free values simultaneously. 
\end{rmk}

\section{Preliminaries}

We shall apply Zantema's result concerning the P\'{o}lya group of $K$ and the ramified primes in $K/\mathbb{Q}$ when $K$ is a finite Galois extension of $\mathbb{Q}$. The Galois group $G := {\rm{Gal}}(K/\mathbb{Q})$ naturally acts on the unit group $\mathcal{O}_{K}^{*}$ and thus makes it a $G$-module. Zantema established a relation between $Po(K)$ and the cohomology group $H^{1}(G,\mathcal{O}_{K}^{*})$ as follows.  

\begin{theorem} \cite[Page 163]{zantema}\label{zmt}
Let $K/\mathbb{Q}$ be a finite Galois extension with Galois group $G := {\rm{Gal}}(K/\mathbb{Q})$ and let $e_{1},\ldots,e_{s}$ be the ramification indices of the ramified primes in $K/\mathbb{Q}$. Then there exists an exact sequence of abelian groups as follows.
\begin{equation}\label{exact-equn}
0 \to H^{1}(G,\mathcal{O}_{K}^{*}) \to \displaystyle\bigoplus_{i = 1}^{s}\mathbb{Z}/e_{i}\mathbb{Z} \to Po(K) \to 0.
\end{equation}
\end{theorem}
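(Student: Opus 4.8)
The plan is to realize $Po(K)$ as a group of \emph{ambiguous} (that is, $G$-invariant) ideal classes and then extract the desired exact sequence from the long exact cohomology sequence attached to the unit group. Write $\mathcal{I}_{K}$ for the group of nonzero fractional ideals of $\mathcal{O}_{K}$ and $\mathcal{P}_{K}$ for its subgroup of principal fractional ideals, so that $Cl_{K} = \mathcal{I}_{K}/\mathcal{P}_{K}$; both carry the natural $G$-action. First I would record that, because $K/\mathbb{Q}$ is Galois, $G$ permutes the primes above a rational prime $p$ transitively, and these primes share a common residue degree $f_{p}$ and ramification index $e_{p}$. Hence each prime above $p$ has norm $p^{f_{p}}$, and the Ostrowski ideal $\Pi_{p^{f_{p}}}(K)$ coincides with $\mathfrak{P}_{p} := \prod_{\mathfrak{p} \mid p}\mathfrak{p}$, while $\Pi_{q}(K) = \mathcal{O}_{K}$ for every $q$ not of this shape. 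By transitivity, a fractional ideal is $G$-invariant exactly when its exponent is constant along each fibre over $p$, so $\mathcal{I}_{K}^{G}$ is free abelian on the $\mathfrak{P}_{p}$. Consequently $Po(K)$, the subgroup of $Cl_{K}$ generated by the classes $[\Pi_{q}(K)]$, is precisely the image of $\mathcal{I}_{K}^{G}$ in $Cl_{K}$; that is, $Po(K) \cong \mathcal{I}_{K}^{G}/(\mathcal{I}_{K}^{G} \cap \mathcal{P}_{K}) = \mathcal{I}_{K}^{G}/\mathcal{P}_{K}^{G}$.

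Next I would feed in the cohomology. The short exact sequence of $G$-modules $1 \to \mathcal{O}_{K}^{*} \to K^{*} \xrightarrow{\,\alpha \mapsto (\alpha)\,} \mathcal{P}_{K} \to 1$ yields a long exact sequence in $G$-cohomology. Since $(K^{*})^{G} = \mathbb{Q}^{*}$ and, by Hilbert's Theorem~90, $H^{1}(G,K^{*}) = 0$, the relevant segment collapses to an isomorphism $H^{1}(G,\mathcal{O}_{K}^{*}) \cong \mathcal{P}_{K}^{G}/P_{0}$, where $P_{0}$ denotes the subgroup of principal ideals $(r)$ with $r \in \mathbb{Q}^{*}$, i.e. the image of $(K^{*})^{G}$ in $\mathcal{P}_{K}^{G}$.

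Finally I would compute the quotient $\mathcal{I}_{K}^{G}/P_{0}$ and assemble the pieces. For each rational prime $p$ one has $(p) = \mathfrak{P}_{p}^{e_{p}}$, so under the identification $\mathcal{I}_{K}^{G} = \bigoplus_{p}\mathbb{Z}\mathfrak{P}_{p}$ the subgroup $P_{0}$ is $\bigoplus_{p} e_{p}\mathbb{Z}\mathfrak{P}_{p}$; the unramified primes contribute $e_{p} = 1$ and drop out, leaving $\mathcal{I}_{K}^{G}/P_{0} \cong \bigoplus_{i=1}^{s}\mathbb{Z}/e_{i}\mathbb{Z}$. Now the chain of inclusions $P_{0} \subseteq \mathcal{P}_{K}^{G} \subseteq \mathcal{I}_{K}^{G}$ furnishes the tautological short exact sequence $0 \to \mathcal{P}_{K}^{G}/P_{0} \to \mathcal{I}_{K}^{G}/P_{0} \to \mathcal{I}_{K}^{G}/\mathcal{P}_{K}^{G} \to 0$, which, upon substituting the three identifications above, becomes exactly \eqref{exact-equn}. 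The step demanding the most care is the first paragraph: one must verify that the Ostrowski generators $\Pi_{q}(K)$ match the invariant ideals $\mathfrak{P}_{p}$ and that $\mathcal{I}_{K}^{G}$ is genuinely free on them, so that $Po(K)$ really is $\mathcal{I}_{K}^{G}/\mathcal{P}_{K}^{G}$. Once this structural identification is in place, the cohomological part is formal, relying only on Hilbert~90 together with the elementary computation $(p) = \mathfrak{P}_{p}^{e_{p}}$.
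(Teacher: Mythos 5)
Your proposal is correct, and it is essentially the argument behind the cited result: the paper itself states Theorem \ref{zmt} as a quotation of \cite{zantema} without reproducing a proof, and your reconstruction (identifying $Po(K)$ with the image of $\mathcal{I}_{K}^{G}$ in $Cl_{K}$ via the ideals $\mathfrak{P}_{p} = \prod_{\mathfrak{p} \mid p}\mathfrak{p}$, applying Hilbert's Theorem 90 to $1 \to \mathcal{O}_{K}^{*} \to K^{*} \to \mathcal{P}_{K} \to 1$ to get $H^{1}(G,\mathcal{O}_{K}^{*}) \cong \mathcal{P}_{K}^{G}/P_{0}$, and computing $\mathcal{I}_{K}^{G}/P_{0} \cong \bigoplus_{i=1}^{s}\mathbb{Z}/e_{i}\mathbb{Z}$ from $(p) = \mathfrak{P}_{p}^{e_{p}}$) is exactly the standard route taken in Zantema's paper and in Cahen--Chabert. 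All the delicate points are handled correctly, in particular the freeness of $\mathcal{I}_{K}^{G}$ on the $\mathfrak{P}_{p}$ (so that $\mathcal{P}_{K}^{G} = \mathcal{I}_{K}^{G} \cap \mathcal{P}_{K}$ and $Po(K) \cong \mathcal{I}_{K}^{G}/\mathcal{P}_{K}^{G}$), so nothing further is needed.
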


The next lemma, which is similar in nature to \cite[Lemma 2]{self-rnt}, enables us to find infinitely many prime numbers satisfying certain congruence and Legendre symbol conditions.

\begin{lemma}\label{main-CRT-lemma}
Let $t \geq 2$ be an integer and let $p$ and $q$ be given odd prime numbers. Then there exist infinitely many $t$-tuples of prime numbers $\{r_{1},\ldots,r_{t}\}$ such that $r_{i} \equiv 1 \pmod {8pq}$ for all $i \in \{1,\ldots,t\}$ and $\left(\dfrac{r_{i}}{r_{j}}\right) = - 1$ for all $i$ and $j$ with $i \neq j$.
\end{lemma}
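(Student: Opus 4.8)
The plan is to build the tuple $\{r_{1},\ldots,r_{t}\}$ one prime at a time, at each stage invoking Dirichlet's theorem on primes in arithmetic progressions to realize a simultaneous system of congruences prescribed by the Chinese Remainder Theorem. The key simplification, which I would record first, is that the conditions $\left(\frac{r_{i}}{r_{j}}\right) = -1$ are symmetric in $i$ and $j$: since every $r_{i} \equiv 1 \pmod{8pq}$ forces $r_{i} \equiv 1 \pmod 4$, quadratic reciprocity gives $\left(\frac{r_{i}}{r_{j}}\right) = \left(\frac{r_{j}}{r_{i}}\right)$. Hence it suffices, when introducing a new prime, to control its Legendre symbol against each of the previously chosen primes in one direction only.

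Concretely, suppose primes $r_{1},\ldots,r_{k}$ (with $1 \leq k < t$) have already been selected so that each $r_{i} \equiv 1 \pmod{8pq}$ and $\left(\frac{r_{i}}{r_{j}}\right) = -1$ for all distinct $i,j \leq k$. For each $j \in \{1,\ldots,k\}$ fix a quadratic non-residue $a_{j}$ modulo $r_{j}$. I would then impose on the next prime $r_{k+1}$ the system
\begin{equation*}
r_{k+1} \equiv 1 \pmod{8pq}, \qquad r_{k+1} \equiv a_{j} \pmod{r_{j}} \quad (1 \leq j \leq k).
\end{equation*}
Because each $r_{j} \equiv 1 \pmod{8pq}$ is coprime to $8pq$ and the $r_{j}$ are distinct primes, the moduli $8pq, r_{1},\ldots,r_{k}$ are pairwise coprime, so by the Chinese Remainder Theorem this system has a unique solution class modulo $N := 8pq\, r_{1}\cdots r_{k}$. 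Moreover the prescribed residue is coprime to $N$ (it is $1$ modulo $8pq$ and a nonzero residue $a_{j}$ modulo each $r_{j}$), so Dirichlet's theorem supplies infinitely many primes $r_{k+1}$ in this class. Any such $r_{k+1}$ satisfies $r_{k+1} \equiv 1 \pmod{8pq}$ and $\left(\frac{r_{k+1}}{r_{j}}\right) = \left(\frac{a_{j}}{r_{j}}\right) = -1$, whence by the reciprocity remark $\left(\frac{r_{j}}{r_{k+1}}\right) = -1$ as well, for every $j \leq k$.

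Starting from any single prime $r_{1} \equiv 1 \pmod{8pq}$ (again furnished by Dirichlet's theorem) and iterating this step $t-1$ times produces a valid tuple $\{r_{1},\ldots,r_{t}\}$. Since the final step offers infinitely many admissible choices of $r_{t}$, this yields infinitely many such tuples. The argument is essentially routine; the only point requiring genuine care is the bookkeeping that guarantees the combined modulus $N$ and the target residue are coprime so that Dirichlet's theorem is applicable, together with the reciprocity observation that lets a single one-sided congruence condition secure the symmetric Legendre-symbol requirement.
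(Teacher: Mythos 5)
Your proposal is correct and follows essentially the same route as the paper: an inductive construction using the Chinese Remainder Theorem to prescribe a residue class that is $1 \pmod{8pq}$ and a quadratic non-residue modulo each previously chosen prime, followed by Dirichlet's theorem. The only difference is cosmetic: you explicitly record the quadratic-reciprocity observation that the condition $r_i \equiv 1 \pmod 4$ makes the Legendre-symbol requirement symmetric, a point the paper's proof uses implicitly.
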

\begin{proof}
We prove the lemma by induction on $t$. For $t = 2$, we first choose a prime number $r_{1} \equiv 1 \pmod {8pq}$. Indeed, such a choice is possible due to Dirichlet's theorem for primes in arithmetic progressions. Then we choose an integer $n_{1}$ with $1 \leq n_{1} \leq r_{1} - 1$ and $\left(\dfrac{n_{1}}{r_{1}}\right) = -1$. By Chinese remainder theorem, there exists a unique integer $x_{0} \pmod {8pqr_{1}}$ that satisfies the congruences $x_{0} \equiv 1 \pmod {8pq}$ and $x_{0} \equiv n_{1} \pmod {r_{1}}$ simultaneously. Then we have $\gcd(x_{0},8pqr_{1}) = 1$ and hence again by Dirichlet's theorem, there exist infinitely many primes $r_{2}$ such that $r_{2} \equiv x_{0} \pmod {8pqr_{1}}$. Consequently, we have $r_{2} \equiv x_{0} \equiv 1 \pmod {8}$ and $\left(\dfrac{r_{2}}{r_{1}}\right) = \left(\dfrac{x_{0}}{r_{1}}\right) = \left(\dfrac{n_{1}}{r_{1}}\right) = -1$.

\smallskip

Now, we assume that $r_{1},\ldots,r_{t - 1}$ are prime numbers with $r_{i} \equiv 1 \pmod {8pq}$ and $\left(\dfrac{r_{i}}{r_{j}}\right) = -1$ for all $i,j \in \{1,\ldots,t - 1\}$ and $i \neq j$. For each $i \in \{1,\ldots,t - 1\}$, we choose integers $n_{i}$ such that $1 \leq n_{i} \leq r_{i} - 1$ and $\left(\dfrac{n_{i}}{r_{i}}\right) = -1$. Consequently, by Chinese remainder theorem, there exists a unique integer $x_{0} \pmod {8pqr_{1}\cdots r_{t - 1}}$ satisfying $x_{0} \equiv 1 \pmod {8pq}$ and $x_{0} \equiv n_{i} \pmod {r_{i}}$ for each $i$. Again, by Dirichlet's theorem, there exist infinitely many prime numbers $r_{t}$ such that $r_{t} \equiv x_{0} \pmod {8pqr_{1}\cdots r_{t - 1}}$. Hence the $t$-tuple $\{r_{1},\ldots,r_{t}\}$ of prime numbers satisfies the desired conditions of the lemma and the proof is thus complete.
\end{proof}

We quote the next proposition from \cite{leriche} which relates the size of the P\'{o}lya group of a cycic extension of $\mathbb{Q}$ to the number of ramified primes.
\begin{propn} \cite[Proposition 2.3]{leriche}
Let $q \geq 3$ be prime number and let $K/\mathbb{Q}$ be a cyclic extension of degree $q$. Then $|Po(K)| = q^{r_{K}} - 1$, where $r_{K}$ stands for the number of ramified primes in $K/\mathbb{Q}$.
\end{propn}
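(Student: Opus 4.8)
The plan is to feed the ramification data of $K/\mathbb{Q}$ into Zantema's exact sequence \eqref{exact-equn} and then pin down the order of the cohomological term $H^{1}(G,\mathcal{O}_{K}^{*})$, where $G = {\rm{Gal}}(K/\mathbb{Q}) \cong \mathbb{Z}/q\mathbb{Z}$. First I would observe that since $[K:\mathbb{Q}] = q$ is prime, at each ramified rational prime $p$ the ramification index $e_{p}$ divides $q$ and exceeds $1$, forcing $e_{p} = q$; that is, every ramified prime is totally ramified. Hence, with $r_{K}$ the number of ramified primes, the middle term of \eqref{exact-equn} is $\bigoplus_{i=1}^{r_{K}} \mathbb{Z}/q\mathbb{Z} = (\mathbb{Z}/q\mathbb{Z})^{r_{K}}$, of order $q^{r_{K}}$, and the sequence yields $|Po(K)| = q^{r_{K}}/|H^{1}(G,\mathcal{O}_{K}^{*})|$. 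The whole problem is thereby reduced to showing that $|H^{1}(G,\mathcal{O}_{K}^{*})| = q$, which then gives $|Po(K)| = q^{r_{K}-1}$.

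For this I would compute the Herbrand quotient $h(\mathcal{O}_{K}^{*}) = |\widehat{H}^{0}(G,\mathcal{O}_{K}^{*})|/|H^{1}(G,\mathcal{O}_{K}^{*})|$, exploiting that it is multiplicative in short exact sequences, equals $1$ on finite modules, and for a $\mathbb{Z}$-free $G$-module depends only on the associated rational representation. Since $q$ is an odd prime, $G$ has no element of order $2$, so complex conjugation fixes $K$ pointwise and $K$ is totally real, with its $q$ real places permuted simply transitively by $G$. The torsion subgroup $\mu_{K} = \{\pm 1\}$ of $\mathcal{O}_{K}^{*}$ is finite, so $h(\mathcal{O}_{K}^{*}) = h(\mathcal{O}_{K}^{*}/\mu_{K})$, and by Dirichlet's unit theorem $\mathcal{O}_{K}^{*}/\mu_{K}$ is a $G$-lattice whose rationalization is isomorphic to the augmentation ideal $I_{G}\otimes\mathbb{Q}$, sitting in $0 \to I_{G} \to \mathbb{Z}[G] \to \mathbb{Z} \to 0$. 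As $\mathbb{Z}[G]$ is cohomologically trivial we have $h(\mathbb{Z}[G]) = 1$, while $h(\mathbb{Z}) = q$ for the trivial module (the norm acts as multiplication by $q$), so multiplicativity gives $h(\mathcal{O}_{K}^{*}) = h(I_{G}) = h(\mathbb{Z}[G])/h(\mathbb{Z}) = 1/q$.

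It remains to evaluate the numerator $\widehat{H}^{0}(G,\mathcal{O}_{K}^{*}) = (\mathcal{O}_{K}^{*})^{G}/N_{K/\mathbb{Q}}(\mathcal{O}_{K}^{*})$. Here $(\mathcal{O}_{K}^{*})^{G} = \mathbb{Z}^{*} = \{\pm 1\}$, and since $N_{K/\mathbb{Q}}(-1) = (-1)^{q} = -1$ (as $q$ is odd), the norm map already hits $-1$, so this quotient is trivial. Combining with $h(\mathcal{O}_{K}^{*}) = 1/q$ forces $|H^{1}(G,\mathcal{O}_{K}^{*})| = q$, and substituting back into the displayed relation gives $|Po(K)| = q^{r_{K}}/q = q^{r_{K}-1}$, as claimed. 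The main obstacle is the middle paragraph: correctly identifying $\mathcal{O}_{K}^{*}\otimes\mathbb{Q}$ with the augmentation ideal and invoking the invariance of the Herbrand quotient under isogeny of $G$-lattices, which together convert the otherwise delicate cohomology of the unit group into the transparent computations for $\mathbb{Z}[G]$ and $\mathbb{Z}$.
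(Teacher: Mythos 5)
Your proof is correct, and since the paper does not actually prove this proposition --- it is quoted from Leriche with only a citation --- there is no internal argument to compare it against; what you wrote is precisely the standard proof underlying the cited result: Zantema's sequence \eqref{exact-equn}, the observation that every ramified prime is totally ramified because $e \mid q$ and $e > 1$, and the Herbrand-quotient computation showing $|H^{1}(G,\mathcal{O}_{K}^{*})| = q$ for a totally real cyclic field of odd prime degree. One point must be flagged, however: your conclusion is $|Po(K)| = q^{r_{K}-1}$, whereas the statement as printed asserts $q^{r_{K}} - 1$. Your version is the right one and the printed form is a misprint: \eqref{exact-equn} exhibits $Po(K)$ as a quotient of $(\mathbb{Z}/q\mathbb{Z})^{r_{K}}$, so $|Po(K)|$ must be a power of $q$, which $q^{r_{K}} - 1$ never is. Indeed Leriche's Proposition 2.3 reads $q^{r_{K}-1}$, and the paper itself silently uses that form when it concludes $|Po(K_{p})| \geq 3^{t-1} > M$ in the proof of Theorem \ref{main-cubic}.
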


We now recall a few results that facilitate us in understanding the group $H^{1}(G,\mathcal{O}_{K}^{*})$ for a bi-quadratic field $K$. We first fix certain notations. For an integer $m \neq 0$, we denote by $[m]$ its canonical image in the group $\mathbb{Q}^{*}/(\mathbb{Q}^{*})^{2}$. For integers $m_{1},\ldots,m_{r}$, we denote the subgroup generated by $[m_{1}],\ldots,[m_{r}]$ in $\mathbb{Q}^{*}/(\mathbb{Q}^{*})^{2}$ by $\langle m_{1},\ldots,m_{r} \rangle$.

\begin{lemma} \cite[Theorem 4]{setzer} \label{set-zer}
For a bi-quadratic field $K$ with quadratic fields $K_{1}, K_{2}$ and $K_{3}$, if the rational prime $2$ is not totally ramified in $K/\mathbb{Q}$, then $H^{1}(G,\mathcal{O}_{K}^{*})$ is same as its $2$-torsion subgroup $H^{1}(G,\mathcal{O}_{K}^{*})[2]$.
\end{lemma}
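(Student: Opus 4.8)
The plan is to deduce the statement directly from Zantema's exact sequence (Theorem~\ref{zmt}) together with a short analysis of the ramification indices in a bi-quadratic extension, rather than computing the cohomology group cocycle-by-cocycle. Write $G = {\rm Gal}(K/\mathbb{Q}) \cong (\mathbb{Z}/2\mathbb{Z})^{2}$. Since $H^{1}(G,\mathcal{O}_{K}^{*})$ coincides with its $2$-torsion subgroup $H^{1}(G,\mathcal{O}_{K}^{*})[2]$ precisely when it is annihilated by $2$, it suffices to exhibit $H^{1}(G,\mathcal{O}_{K}^{*})$ as a subgroup of a group of exponent dividing $2$.

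Theorem~\ref{zmt} supplies exactly such an embedding: the exact sequence \eqref{exact-equn} identifies $H^{1}(G,\mathcal{O}_{K}^{*})$ with a subgroup of $\bigoplus_{i=1}^{s} \mathbb{Z}/e_{i}\mathbb{Z}$, where $e_{1},\ldots,e_{s}$ are the ramification indices of the ramified primes of $K/\mathbb{Q}$. Hence I would reduce the lemma to the claim that every $e_{i}$ lies in $\{1,2\}$, so that $\bigoplus_{i=1}^{s}\mathbb{Z}/e_{i}\mathbb{Z}$ has exponent dividing $2$ and any subgroup of it is automatically equal to its $2$-torsion.

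The remaining step is the ramification analysis. Each $e_{i}$ divides $[K:\mathbb{Q}] = 4$, so a priori $e_{i} \in \{1,2,4\}$, and $e_{i} = 4$ forces the corresponding prime to be totally ramified with inertia group all of $G$. For an odd prime $p$ the ramification is tame, because wild ramification at $p$ would require $p \mid e_{i}$, which is impossible since $e_{i}$ is a power of $2$; the inertia group is therefore cyclic, and as the cyclic subgroups of $(\mathbb{Z}/2\mathbb{Z})^{2}$ have order at most $2$ we obtain $e_{p} \in \{1,2\}$. The only prime that can be wildly ramified is $2$, and the hypothesis that $2$ is not totally ramified in $K/\mathbb{Q}$ rules out $e_{2} = 4$, leaving $e_{2} \in \{1,2\}$.

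Combining these observations, every ramified prime contributes $e_{i} = 2$, so $\bigoplus_{i=1}^{s}\mathbb{Z}/e_{i}\mathbb{Z} \cong (\mathbb{Z}/2\mathbb{Z})^{s}$, and the injection coming from Theorem~\ref{zmt} forces $H^{1}(G,\mathcal{O}_{K}^{*})$ to equal its $2$-torsion subgroup. The only genuine content in the argument is the behaviour at the prime $2$: this is exactly where the hypothesis enters, and it is the step I would expect to demand the most care, since without it $2$ could be totally ramified and contribute a cyclic factor of order $4$, breaking the conclusion.
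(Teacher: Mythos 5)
Your argument is correct, and it is genuinely different from what the paper does: the paper offers no proof at all for this lemma, quoting it directly from Setzer's Theorem~4 (whose own derivation goes through an explicit analysis of the unit group of a $C_{2}\times C_{2}$ field). Your route instead piggybacks on Zantema's exact sequence \eqref{exact-equn}, which the paper already records as Theorem~\ref{zmt}: the sequence embeds $H^{1}(G,\mathcal{O}_{K}^{*})$ into $\bigoplus_{i=1}^{s}\mathbb{Z}/e_{i}\mathbb{Z}$, and your ramification analysis is sound --- odd primes are tamely ramified since $e_{i}\mid 4$, so their inertia groups are cyclic subgroups of $(\mathbb{Z}/2\mathbb{Z})^{2}$ and have order at most $2$, while the hypothesis that $2$ is not totally ramified is exactly the statement that $e_{2}\neq 4$. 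Hence the target of the injection has exponent dividing $2$ and $H^{1}(G,\mathcal{O}_{K}^{*})$ is killed by $2$, which is the assertion. What your approach buys is a short, self-contained proof that in fact generalizes (any Galois extension of $\mathbb{Q}$ all of whose ramification indices are at most $2$ has $H^{1}(G,\mathcal{O}_{K}^{*})$ of exponent $2$) and that does not need Setzer's restriction to totally real fields; what it does not give is the finer structural information about $\mathcal{O}_{K}^{*}$ and the precise generators of $H^{1}(G,\mathcal{O}_{K}^{*})[2]$ that Setzer's and Zantema's unit-theoretic computations provide, which the paper does need later (Lemma~\ref{six-lemma}). One small presentational point: the conclusion of your last paragraph should read $e_{i}\in\{1,2\}$ rather than ``every ramified prime contributes $e_{i}=2$'' being the inference --- for a ramified prime $e_{i}=2$ is automatic once $e_{i}\neq 1,4$, so the phrasing is harmless, but the logical content is the exclusion of $4$.
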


\begin{lemma} \cite[Lemma 4.3]{zantema}\label{six-lemma}
Let $K$ be a bi-quadratic field with quadratic subfields $K_{1}, K_{2}$ and $K_{3}$. For $i = 1,2,3$, let $d_{K_{i}}$ be the discriminant of $K_{i}$ and let $u_{i} = z_{i} + t_{i}d_{K_{i}}$ be a fundamental unit of $\mathcal{O}_{K_{i}}$ where $z_{i} > 0$. Then $H^{1}(G,\mathcal{O}_{K}^{*})[2]$ is isomorphic to the subgroup $\langle d_{1},d_{2},d_{3},a_{1},a_{2},a_{3} \rangle$, where $a_{i} = N_{K_{i}/\mathbb{Q}}(u_{i} + 1)$ if $N_{K_{i}/\mathbb{Q}}(u_{i}) = 1$ and $1$ otherwise.
\end{lemma}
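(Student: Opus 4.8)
The plan is to make Zantema's cohomological setup concrete through a Hilbert 90 argument and then to extract a square class in $\mathbb{Q}^{*}/(\mathbb{Q}^{*})^{2}$. Write $G = \mathrm{Gal}(K/\mathbb{Q}) = \langle \sigma,\tau \rangle \cong (\mathbb{Z}/2\mathbb{Z})^{2}$ and let $K_{1},K_{2},K_{3}$ be the fixed fields of $\sigma,\tau,\sigma\tau$, with $s_{i}$ the nontrivial automorphism of $K_{i}/\mathbb{Q}$; since $G$ has exponent $2$, all of $H^{1}(G,\mathcal{O}_{K}^{*})$ is $2$-primary, and it is the $2$-torsion part that the subfield data control (the possible $4$-torsion being governed by the ramification of $2$, cf. Lemma \ref{set-zer}). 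I would begin from the short exact sequence of $G$-modules $1 \to \mathcal{O}_{K}^{*} \to K^{*} \to K^{*}/\mathcal{O}_{K}^{*} \to 1$. As Hilbert's Theorem 90 gives $H^{1}(G,K^{*}) = 0$, the long exact sequence yields a surjection $(K^{*}/\mathcal{O}_{K}^{*})^{G} \twoheadrightarrow H^{1}(G,\mathcal{O}_{K}^{*})$ with kernel the image of $\mathbb{Q}^{*} = (K^{*})^{G}$. This gives the identification $H^{1}(G,\mathcal{O}_{K}^{*}) \cong \mathcal{W}/(\mathbb{Q}^{*}\mathcal{O}_{K}^{*})$, where $\mathcal{W} = \{w \in K^{*} : (w)\text{ is a } G\text{-invariant ideal}\}$ and the class of $w$ corresponds to the cocycle $g \mapsto g(w)/w$, which is unit-valued precisely because $(w)$ is ambiguous. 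Under this dictionary the $2$-torsion subgroup becomes $\{w \in \mathcal{W} : w^{2} \in \mathbb{Q}^{*}\mathcal{O}_{K}^{*}\}/(\mathbb{Q}^{*}\mathcal{O}_{K}^{*})$.

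Next I would define the comparison homomorphism $\Phi : H^{1}(G,\mathcal{O}_{K}^{*})[2] \to \mathbb{Q}^{*}/(\mathbb{Q}^{*})^{2}$. For $w$ in the numerator above, write $w^{2} = r\varepsilon$ with $r \in \mathbb{Q}^{*}$ and $\varepsilon \in \mathcal{O}_{K}^{*}$, and set $\Phi([w]) = [r]$. Replacing $w$ by $qvw$ with $q \in \mathbb{Q}^{*}$, $v \in \mathcal{O}_{K}^{*}$ multiplies $r$ by $q^{2}$, so $[r]$ is well defined up to the sign ambiguity coming from $\mathbb{Q}^{*}\cap\mathcal{O}_{K}^{*} = \{\pm 1\}$; this residual sign is pinned down by the normalization $z_{i} > 0$ of the fundamental units (equivalently, by choosing totally positive representatives), and in the totally real case all the relevant $r$ are positive, so the ambiguity disappears. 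Injectivity of $\Phi$ is the clean half: if $[r] = [1]$, say $r = \rho^{2}$ with $\rho \in \mathbb{Q}^{*}$, then $(w/\rho)^{2} = \varepsilon \in \mathcal{O}_{K}^{*}$ forces $w/\rho \in \mathcal{O}_{K}^{*}$, so $w \in \mathbb{Q}^{*}\mathcal{O}_{K}^{*}$ and the class is trivial.

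I would then locate the six generators as explicit elements of $\mathcal{W}$. First, $\sqrt{d_{K_{i}}} \in K_{i}\subseteq K$ lies in $\mathcal{W}$ because $g(\sqrt{d_{K_{i}}}) = \pm\sqrt{d_{K_{i}}}$, and $(\sqrt{d_{K_{i}}})^{2} = d_{K_{i}} \in \mathbb{Q}^{*}$ gives $\Phi = [d_{i}]$. Second, when $N_{K_{i}/\mathbb{Q}}(u_{i}) = 1$ the element $u_{i}+1$ satisfies $s_{i}(u_{i}+1)/(u_{i}+1) = u_{i}^{-1}$, so $(u_{i}+1)$ is $G$-invariant, and the identity $(u_{i}+1)^{2} = u_{i}\,N_{K_{i}/\mathbb{Q}}(u_{i}+1) = a_{i}u_{i}$ shows $\Phi = [a_{i}]$. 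Finally, when $N_{K_{i}/\mathbb{Q}}(u_{i}) = -1$ the relevant norm-one unit is $u_{i}^{2}$, whose Hilbert 90 solution is $u_{i}^{-1}\sqrt{d_{K_{i}}}$ and whose invariant is again $[d_{i}]$, already recorded among the $d_{j}$; this is exactly why one sets $a_{i} = 1$ in that case. Thus the image of $\Phi$ contains $\langle d_{1},d_{2},d_{3},a_{1},a_{2},a_{3}\rangle$.

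The main obstacle is the reverse inclusion — that these six classes already exhaust the image. Here I would analyse an arbitrary class through its ambiguous ideal $(w)$. The $2$-torsion condition $w^{2}\in\mathbb{Q}^{*}\mathcal{O}_{K}^{*}$ forces $(w)^{2}$ to be extended from $\mathbb{Q}$, so $(w)$ is an ambiguous ideal whose square descends to the base; the standard theory of ambiguous ideals represents such an ideal, modulo extended and principal ideals, by the primes ramifying in $K/\mathbb{Q}$. Transporting this description through $\Phi$ shows that the ramification data contributes exactly the classes $[d_{i}]$, while the principal ambiguous ideals — those generated by elements such as $u_{i}+1$ attached to the subfield units — contribute the classes $[a_{i}]$. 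The genuinely delicate point is the bookkeeping of these principal ambiguous ideals against the full unit group: one must verify that the index $[\mathcal{O}_{K}^{*}:\langle -1,u_{1},u_{2},u_{3}\rangle]$, which is a power of $2$, introduces no rational square class beyond those already listed. The location of the prime $2$ in $K/\mathbb{Q}$ controls this index and is the subtlest input (cf. the analysis behind Lemma \ref{set-zer}); once it is handled, combining the ideal-theoretic and unit-theoretic contributions identifies the image of the injective map $\Phi$ with $\langle d_{1},d_{2},d_{3},a_{1},a_{2},a_{3}\rangle$, which completes the proof.
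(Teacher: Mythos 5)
The paper does not actually prove this lemma --- it is quoted from Zantema \cite[Lemma 4.3]{zantema} without proof --- so your proposal can only be measured against Zantema's argument, of which it reconstructs the standard skeleton correctly. The identification via Hilbert 90 applied to $1 \to \mathcal{O}_{K}^{*} \to K^{*} \to K^{*}/\mathcal{O}_{K}^{*} \to 1$, giving $H^{1}(G,\mathcal{O}_{K}^{*}) \cong \mathcal{W}/\mathbb{Q}^{*}\mathcal{O}_{K}^{*}$ with $2$-torsion $\{w : w^{2} \in \mathbb{Q}^{*}\mathcal{O}_{K}^{*}\}/\mathbb{Q}^{*}\mathcal{O}_{K}^{*}$, is sound; the map $\Phi$ is well defined once you normalize $r > 0$ (legitimate, since $\mathbb{Q}^{*} \cap \mathcal{O}_{K}^{*} = \{\pm 1\}$ lets you replace $(r,\varepsilon)$ by $(-r,-\varepsilon)$, and the targets $d_{i}$, $a_{i} = 2(z_{i}+1)$ are positive); injectivity is proved cleanly (note $w/\rho$ is integral because it satisfies $X^{2} - \varepsilon$); and the computations $(\sqrt{d_{K_i}})^{2} = d_{K_i}$ and $(u_{i}+1)^{2} = a_{i}u_{i}$ correctly place $\langle d_{1},d_{2},d_{3},a_{1},a_{2},a_{3} \rangle$ inside the image.

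The genuine gap is the reverse containment, and you concede it at exactly the point where the lemma's content lives. What must be shown is: if $(w)$ is ambiguous and $w^{2} = r\varepsilon$ with $r \in \mathbb{Q}^{*}$, $\varepsilon \in \mathcal{O}_{K}^{*}$, then $[r] \in \langle d_{i}, a_{i} \rangle$; equivalently, one must compute the subgroup $\{[r] : r \in \mathbb{Q}^{*} \cap (K^{*})^{2}\mathcal{O}_{K}^{*}\}$ of $\mathbb{Q}^{*}/(\mathbb{Q}^{*})^{2}$. Your appeal to ``the standard theory of ambiguous ideals'' only yields that $r$ is supported on ramified primes (since $(r) = (w)^{2}$ forces even valuation at every unramified prime), and that bound is far too weak: in the paper's own application there are $t + 2$ ramified primes, so the square classes supported on ramified primes form a group of order $2^{t+2}$, and the entire point of the proof of Theorem \ref{bq-dji} is that most of these classes do \emph{not} occur in $H^{1}(G,\mathcal{O}_{K}^{*})[2]$. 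Ruling them out requires the $G$-module structure of $\mathcal{O}_{K}^{*}$ relative to $\langle -1, u_{1}, u_{2}, u_{3} \rangle$ --- Kubota's unit index, with a case analysis over the possible extra generators of the form $\sqrt{u_{i}u_{j}}$, $\sqrt{u_{1}u_{2}u_{3}}$ --- followed, for each class of $\varepsilon$ modulo $(\mathcal{O}_{K}^{*})^{2}$, by a verification that $r\varepsilon \in (K^{*})^{2}$ pins $[r]$ to a product of $[d_{i}]$'s and $[a_{i}]$'s (for instance, for $\varepsilon = u_{1}u_{2}$ with both norms $1$, the identity $(u_{1}+1)^{2}(u_{2}+1)^{2} = a_{1}a_{2}\,u_{1}u_{2}$ forces $[r] = [a_{1}a_{2}]$). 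None of this bookkeeping is performed in your proposal; the clause ``once it is handled'' is precisely Zantema's Lemma 4.3. As written, you have established only the lower bound $\langle d_{1},d_{2},d_{3},a_{1},a_{2},a_{3} \rangle \subseteq \Phi\bigl(H^{1}(G,\mathcal{O}_{K}^{*})[2]\bigr)$, not the claimed equality.
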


In the notation of Lemma \ref{six-lemma}, if $N_{K_{i}/\mathbb{Q}}(u_{i}) = z_{i}^{2} - t_{i}^{2}d = 1$, then we have $$N_{K_{i}/\mathbb{Q}}(u_{i} + 1) = (z_{i} + 1)^{2} - t_{i}^{2}d = z_{i}^{2} - t_{i}^{2}d + 2z_{i} + 1 = 2(z_{i} + 1).$$ 

\begin{lemma} \cite[Lemma 2.1]{rajaei}\label{raja-lemma}
Let $p \geq 3$ be a prime number such that $p \equiv 3 \pmod {4}$ and let $u = z + t\sqrt{p}$ be a fundamental unit of $\mathbb{Q}(\sqrt{p})$. Then we have
\begin{equation*}
N_{\mathbb{Q}(\sqrt{p})/\mathbb{Q}}(u + 1)=
\begin{cases}
    2p   & ~ \text{ if } p \equiv 3 \pmod {8},\\
    2   & ~ \text{ if } p \equiv 7 \pmod {8}.
\end{cases}
\end{equation*}
\end{lemma}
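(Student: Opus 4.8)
The plan is to reduce everything to a statement about the square class of $z+1$ and then read off that class from the residue of $p$ modulo $8$. Since in Lemma \ref{six-lemma} the quantity $N_{\mathbb{Q}(\sqrt{p})/\mathbb{Q}}(u+1)$ enters only through its image in $\mathbb{Q}^{*}/(\mathbb{Q}^{*})^{2}$, I read the two displayed equalities as identities in that group, so that the right-hand sides $2p$ and $2$ denote these classes. First I would record that, for $p \equiv 3 \pmod 4$, the negative Pell equation $x^{2} - py^{2} = -1$ is insoluble: reducing modulo $4$ gives $x^{2} + y^{2} \equiv 3 \pmod 4$, impossible since squares are $0$ or $1$ mod $4$. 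Hence the fundamental unit has norm $+1$, i.e. $z^{2} - t^{2}p = 1$, and the computation already recorded just before Lemma \ref{raja-lemma} applies verbatim to give
\begin{equation*}
N_{\mathbb{Q}(\sqrt{p})/\mathbb{Q}}(u + 1) = (z+1)^{2} - t^{2}p = 2(z+1).
\end{equation*}
Thus the task is exactly to determine the square-free part of $z+1$.

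The second step, which I expect to be the crux, is to show that $z$ is even (equivalently $t$ is odd). Suppose instead $t$ is even; then $z$ is odd, and writing $z-1 = 2c$, $z+1 = 2d$ with $d = c+1$ (so $\gcd(c,d)=1$) and $t = 2s$, the factorisation $(z-1)(z+1) = pt^{2}$ becomes $cd = ps^{2}$. Since $c,d$ are coprime and $p$ is prime, this forces $\{c,d\} = \{a^{2}, pb^{2}\}$ with $s = ab$. If $d = pb^{2}$, then $d - c = 1$ yields $a^{2} - pb^{2} = -1$, contradicting the insolubility of the negative Pell equation; if $c = pb^{2}$, then $a^{2} - pb^{2} = 1$ and $(a + b\sqrt{p})^{2} = (a^{2}+pb^{2}) + 2ab\sqrt{p} = z + t\sqrt{p} = u$, exhibiting $u$ as the square of a strictly smaller unit $>1$ and contradicting its being fundamental. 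Hence $t$ is odd and $z$ is even.

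With $z$ even, both $z-1$ and $z+1$ are odd and coprime with product $pt^{2}$, so $p$ divides exactly one of them and the same coprime-factorisation argument gives $\{z-1, z+1\} = \{a^{2}, pb^{2}\}$ with $a,b$ odd. This splits into two mutually exclusive cases. If $z+1 = pb^{2}$ and $z-1 = a^{2}$, then $a^{2} \equiv 1 \pmod 8$ forces $z+1 = a^{2}+2 \equiv 3 \pmod 8$, so $p \equiv pb^{2} \equiv 3 \pmod 8$ and $N_{\mathbb{Q}(\sqrt{p})/\mathbb{Q}}(u+1) = 2pb^{2}$, giving the class $2p$. If instead $z+1 = b^{2}$ and $z-1 = pa^{2}$, then $b^{2} \equiv 1 \pmod 8$ gives $z-1 \equiv 7 \pmod 8$, so $p \equiv pa^{2} \equiv 7 \pmod 8$ and $N_{\mathbb{Q}(\sqrt{p})/\mathbb{Q}}(u+1) = 2b^{2}$, giving the class $2$. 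Since $p \equiv 3 \pmod 4$ is either $3$ or $7$ modulo $8$, the residue of $p$ mod $8$ selects exactly one case, matching the stated dichotomy.

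The main obstacle is genuinely the parity step: the downstream work is only a coprime factorisation plus the elementary fact that odd squares are $\equiv 1 \pmod 8$, but showing $z$ is even requires both the insolubility of $x^{2}-py^{2}=-1$ and the minimality of the fundamental unit (to exclude $u$ being a perfect square). I would take care to state the two descent contradictions precisely, since it is this refinement from $p \bmod 4$ to $p \bmod 8$ that makes the final dichotomy come out correctly.
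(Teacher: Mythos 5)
Your proof is correct, but there is no in-paper proof to compare it against: the authors import Lemma \ref{raja-lemma} verbatim from \cite[Lemma 2.1]{rajaei} without reproving it, so your argument supplies a self-contained proof of a quoted result. Your decision to read the two equalities in $\mathbb{Q}^{*}/(\mathbb{Q}^{*})^{2}$ is the right one, and indeed the only tenable one: for $p = 7$ the fundamental unit is $8 + 3\sqrt{7}$ and $N_{\mathbb{Q}(\sqrt{7})/\mathbb{Q}}(u+1) = 18 = 2\cdot 3^{2}$, so the literal equality $N(u+1) = 2$ fails, while the class $[2]$ is exactly what the paper extracts when it writes $[a_{K_{p}}] = [2]$ or $[2p]$. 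The individual steps all check out: the mod-$4$ obstruction shows $N(u) = 1$, whence $N(u+1) = 2(z+1)$ as in the displayed computation after Lemma \ref{six-lemma}; your exclusion of odd $z$ is sound (in the subcase $d = pb^{2}$ you hit the insoluble negative Pell equation, and in the subcase $c = pb^{2}$ your identities $z = c + d = a^{2} + pb^{2}$ and $t = 2ab$ correctly exhibit $u = (a + b\sqrt{p})^{2}$, contradicting fundamentality since $1 < a + b\sqrt{p} < u$); and the final coprime factorization $\{z-1, z+1\} = \{a^{2}, pb^{2}\}$ combined with odd squares being $\equiv 1 \pmod 8$ gives the mutually exclusive dichotomy matching $p \equiv 3$ or $7 \pmod 8$. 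Structurally, your argument is the same coprime-factorization-of-$z^{2}-1$ device that the authors themselves deploy in Section 3 (Cases 1 and 2 for the field $K_{mp}$), but refined from a mod-$4$ to a mod-$8$ analysis; what your version buys, beyond making the cited lemma self-contained, is that it isolates precisely where the fundamentality of $u$ is needed (to rule out $u$ being a square), a point the Section 3 computation never has to confront because it treats both parity cases symmetrically rather than eliminating one.
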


We observe that to apply Lemma \ref{six-lemma}, it is required to know the sign of the fundamental unit of the suitable quadratic field. Then next lemma addresses this issue for a particular class of quadratic fields.

\begin{lemma} \cite{trotter})\label{trott-lemma}
Let $t \geq 3$ be an odd integer and let $p_{1},\ldots,p_{t}$ be prime numbers with $p_{i} \equiv 1 \pmod {4}$ and $\left(\dfrac{p_{i}}{p_{j}}\right) = -1$ for each $i \neq j$. The the fundamental unit of the quadratic field $\mathbb{Q}(\sqrt{p_{1}\cdots p_{t}})$ is $-1$.
\end{lemma}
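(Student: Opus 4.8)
The plan is first to read the conclusion correctly. A real quadratic field has fundamental unit of infinite order, so the displayed assertion must mean that the \emph{norm} of the fundamental unit $\varepsilon$ of $\mathbb{Q}(\sqrt{D})$, with $D = p_{1}\cdots p_{t}$, equals $-1$; equivalently, that the negative Pell equation $x^{2} - Dy^{2} = -1$ is solvable. I would prove this through the genus theory of the narrow class group $Cl^{+}$ of $\mathbb{Q}(\sqrt{D})$. Since $D$ is a product of primes $\equiv 1 \pmod 4$ we have $D \equiv 1 \pmod 4$, so $D$ is itself the discriminant and has exactly $t$ prime divisors; hence genus theory gives $\dim_{\mathbb{F}_{2}} Cl^{+}/(Cl^{+})^{2} = t - 1$.

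The crucial reduction is the classical equivalence $N_{\mathbb{Q}(\sqrt{D})/\mathbb{Q}}(\varepsilon) = -1$ if and only if the ideal $(\sqrt{D})$ is principal in the narrow sense. Writing $\mathfrak{p}_{i}$ for the ramified prime above $p_{i}$, one has $\mathfrak{p}_{i}^{2} = (p_{i})$ with $p_{i} > 0$ totally positive, so each class $[\mathfrak{p}_{i}]$ has order dividing $2$ in $Cl^{+}$, and $(\sqrt{D}) = \mathfrak{p}_{1}\cdots \mathfrak{p}_{t}$. Thus it suffices to show $\prod_{i=1}^{t}[\mathfrak{p}_{i}] = 1$ in $Cl^{+}$. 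By Hilbert reciprocity this product always lies in the principal genus $(Cl^{+})^{2}$ (this is precisely the vanishing of the row sums of the Rédei matrix below). Therefore $\prod_{i}[\mathfrak{p}_{i}]$ is a $2$-torsion class lying inside $(Cl^{+})^{2}$, and it is forced to be trivial as soon as the $2$-Sylow subgroup of $Cl^{+}$ is elementary abelian, i.e. as soon as the narrow $4$-rank of $\mathbb{Q}(\sqrt{D})$ is zero.

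It remains to compute this $4$-rank, and here the hypotheses on $t$ and on the Legendre symbols enter. I would form the Rédei matrix $\tilde{R} = (\tilde{R}_{ij}) \in M_{t}(\mathbb{F}_{2})$ with $(-1)^{\tilde{R}_{ij}} = \left(\dfrac{p_{i}}{p_{j}}\right)$ for $i \neq j$ and with diagonal entries chosen so that every row sums to $0$; the standard formula reads (narrow $4$-rank) $= (t-1) - \mathrm{rank}_{\mathbb{F}_{2}}(\tilde{R})$. By hypothesis every off-diagonal entry equals $1$, and since $t$ is odd each off-diagonal row sum $t-1$ is even, so every diagonal entry is $0$ and $\tilde{R} = J - I$ over $\mathbb{F}_{2}$, where $J$ is the all-ones matrix. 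A direct check shows that for $t$ odd the kernel of $\tilde{R}$ is spanned by the all-ones vector, so $\mathrm{rank}_{\mathbb{F}_{2}}(\tilde{R}) = t - 1$ and the narrow $4$-rank vanishes. Combined with the previous paragraph this yields $\prod_{i}[\mathfrak{p}_{i}] = 1$, hence $N(\varepsilon) = -1$.

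The linear-algebra computation in the last paragraph is routine, and the parity hypothesis on $t$ is exactly what makes the all-ones vector the unique relation among the ramified classes while keeping $\tilde{R}$ of maximal rank $t-1$. The main obstacle is the bridge in the second paragraph between $Cl^{+}$ and the sign of the norm of $\varepsilon$: one must (i) justify the equivalence $N(\varepsilon) = -1 \Leftrightarrow (\sqrt{D})$ narrowly principal, tracking the index $[\mathcal{O}_{K}^{*} : \mathcal{O}_{K,+}^{*}]$ of totally positive units, and (ii) verify that $\prod_{i}[\mathfrak{p}_{i}]$ indeed lies in the principal genus, which is the reciprocity input responsible for the vanishing row sums. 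Once these two facts are secured, the oddness of $t$ delivers the conclusion, and I would finally note that the statement is attributed to Trotter, so in the paper itself one may simply cite it and bypass this argument.
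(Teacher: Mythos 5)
You have correctly repaired the statement (the lemma must assert that the \emph{norm} of the fundamental unit is $-1$; as written it is a typo), and your argument is sound. Note, however, that the paper contains no proof of this lemma at all --- it is quoted verbatim from Trotter --- so the comparison is really with Trotter's cited argument rather than with anything in the text. Trotter's proof (like Dirichlet's original $t=2$ case) is elementary: assuming $N(\varepsilon)=1$ one factors $(a-1)(a+1)=Db^{2}$ into coprime pieces, obtains an equation $D_{1}u^{2}-D_{2}v^{2}=\pm 1$ or $\pm 2$ for a nontrivial factorization $D=D_{1}D_{2}$, and rules out every such factorization by reducing modulo a suitable $p_{j}$ and using $\left(\tfrac{p_{i}}{p_{j}}\right)=-1$ together with the oddness of $t$. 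This is precisely the style of computation the authors themselves carry out in Section 3 (Cases 1 and 2 for the subfield $K_{mp}$), so the cited proof sits squarely inside the paper's toolkit and is self-contained. Your route through the narrow class group is genuinely different: the reduction $N(\varepsilon)=-1\Leftrightarrow(\sqrt{D})$ narrowly principal, the observation that $\prod_{i}[\mathfrak{p}_{i}]$ is a $2$-torsion class in the principal genus, and R\'edei's formula showing the narrow $4$-rank is $(t-1)-\operatorname{rank}_{\mathbb{F}_{2}}(J-I)=0$ are all correct (your rank computation for $J-I$ with $t$ odd checks out, as does the claim that the principal-genus membership follows from reciprocity since all $p_{i}\equiv 1\pmod 4$ makes the symbol matrix symmetric). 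What your approach buys is a conceptual explanation of why exactly these hypotheses appear --- they are precisely what kills the narrow $4$-rank --- and it generalizes immediately to any symbol configuration with $\operatorname{rank}\tilde{R}=t-1$; what it costs is the appeal to R\'edei's theorem and the two bridging facts you flag, which the elementary descent avoids entirely.
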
 
To prove Theorem \ref{main-cubic}, we make use of the following proposition which is taken from \cite{leriche}.
\begin{propn} \cite[Proposition 2.6]{leriche}\label{cubic-wala-proposition}
Let $q \geq 3$ be a prime number and let $K$ be a cyclic extension of $\mathbb{Q}$ of degree $q$. If $r_{K}$ is the number of ramified primes in $K/\mathbb{Q}$, then $Po(K) = q^{r_{K}} - 1$.
\end{propn}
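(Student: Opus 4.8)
The plan is to read off $|Po(K)|$ from Zantema's exact sequence in Theorem \ref{zmt}, for which I need only two inputs: the size of the middle term and the size of $H^{1}(G,\mathcal{O}_{K}^{*})$, where $G := \mathrm{Gal}(K/\mathbb{Q})$ is cyclic of order $q$.

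First I would pin down the ramification. Since $K/\mathbb{Q}$ is Galois of prime degree $q$, for any ramified rational prime $p$ the factorization $p\mathcal{O}_{K} = (\mathfrak{p}_{1}\cdots\mathfrak{p}_{g})^{e}$ obeys $efg = q$ with $e > 1$; as $q$ is prime this forces $e = q$ and $f = g = 1$, i.e. every ramified prime is totally ramified with ramification index $q$. Hence the middle term of the sequence \eqref{exact-equn} is $\bigoplus_{i=1}^{r_{K}}\mathbb{Z}/q\mathbb{Z}$, of order $q^{r_{K}}$, and exactness gives $|Po(K)| = q^{r_{K}}/|H^{1}(G,\mathcal{O}_{K}^{*})|$. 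It therefore remains to prove $|H^{1}(G,\mathcal{O}_{K}^{*})| = q$, whence $|Po(K)| = q^{\,r_{K}-1}$.

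The computation of $H^{1}(G,\mathcal{O}_{K}^{*})$ is the crux, and I would carry it out via the Herbrand quotient $h(\mathcal{O}_{K}^{*}) = |\widehat{H}^{0}(G,\mathcal{O}_{K}^{*})|/|\widehat{H}^{1}(G,\mathcal{O}_{K}^{*})|$; since $G$ is cyclic, $\widehat{H}^{1} \cong \widehat{H}^{-1} \cong H^{1}$, so knowing $h(\mathcal{O}_{K}^{*})$ together with $\widehat{H}^{0}$ pins down $|H^{1}|$. Two features special to odd degree drive the calculation. Because $q$ is odd, complex conjugation (an element of $G$ of order dividing $2$) is trivial, so $K$ is totally real and $\mu_{K} = \{\pm 1\}$; moreover $N_{K/\mathbb{Q}}(-1) = (-1)^{q} = -1$, so $-1 \in N_{K/\mathbb{Q}}(\mathcal{O}_{K}^{*})$ and therefore $\widehat{H}^{0}(G,\mathcal{O}_{K}^{*}) = (\mathcal{O}_{K}^{*})^{G}/N_{K/\mathbb{Q}}(\mathcal{O}_{K}^{*}) = \{\pm 1\}/\{\pm 1\}$ is trivial. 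For the Herbrand quotient itself I would use that it depends only on the rational representation $\mathcal{O}_{K}^{*}\otimes\mathbb{Q}$ (finite modules contributing $1$). Dirichlet's logarithmic embedding identifies $\mathcal{O}_{K}^{*}\otimes\mathbb{Q}$ with the augmentation ideal $I_{G}\otimes\mathbb{Q}$, since the $q$ real places form a single free $G$-orbit; from $0 \to I_{G} \to \mathbb{Z}[G] \to \mathbb{Z} \to 0$, the vanishing of Tate cohomology of the induced module $\mathbb{Z}[G]$ and $h(\mathbb{Z}) = q$ for the trivial module give $h(\mathcal{O}_{K}^{*}) = h(I_{G}) = 1/q$. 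Combined with $|\widehat{H}^{0}| = 1$, this yields $|H^{1}(G,\mathcal{O}_{K}^{*})| = q$.

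Putting the pieces together gives $|Po(K)| = q^{r_{K}}/q = q^{\,r_{K}-1}$. The main obstacle is exactly the cohomological computation of $H^{1}(G,\mathcal{O}_{K}^{*})$: everything hinges on correctly identifying the $\mathbb{Q}[G]$-module structure of the units through the logarithmic embedding, and on the two arithmetic inputs (total reality and the fact that $-1$ is a norm) that are genuinely special to odd $q$. Indeed, for $q = 2$ one has $N_{K/\mathbb{Q}}(-1) = +1$, so $\widehat{H}^{0}$ need not vanish and the resulting count of $|H^{1}|$ can change; this is precisely why the hypothesis $q \geq 3$ is imposed.
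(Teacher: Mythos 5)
The paper offers no proof of this proposition --- it is quoted verbatim from Leriche \cite[Proposition 2.6]{leriche}, whose own argument rests on exactly the two ingredients you use: Zantema's exact sequence with every ramified prime totally ramified (so the middle term has order $q^{r_{K}}$), and the computation $|H^{1}(G,\mathcal{O}_{K}^{*})| = q$ via the Herbrand quotient of the unit lattice. Your execution is correct in all the places that matter: $e=q$ follows from $efg=q$ prime; total reality and $N_{K/\mathbb{Q}}(-1)=-1$ (both genuinely using $q$ odd) kill $\widehat{H}^{0}$; and the identification $\mathcal{O}_{K}^{*}\otimes\mathbb{Q}\cong I_{G}\otimes\mathbb{Q}$ (valid because the archimedean decomposition groups are trivial for odd $q$) gives $h(\mathcal{O}_{K}^{*})=1/q$, hence $|H^{1}|=q$ and $|Po(K)|=q^{r_{K}-1}$. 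Note also that you have silently and correctly repaired the statement: as printed, ``$Po(K)=q^{r_{K}}-1$'' is a typo for $|Po(K)|=q^{r_{K}-1}$, which is what Leriche proves and what the paper actually uses in the proof of Theorem \ref{main-cubic} (where $|Po(K_{p})|\geq 3^{t-1}$).
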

To apply Proposition \ref{cubic-wala-proposition} in our context of Theorem \ref{main-cubic}, we need to ensure that the discriminant of $K_{n}$ has a large number of prime divisors. Now, the discriminant of $K_{n}$ is known to be $(n^{2} + 3n + 9)^{2}$, provided the polynomial $h(n) = n^{2} + 3n + 9$ is square-free. Therefore, we need to know the distribution of the integers $n$ such that $h(n)$ is square-free. Thus we are required to ensure that $h(n)$ is square-free as well as has large number of prime divisors for infinitely many values of $n$. Our strategy is to employ Dirichlet's theorem for primes in an arithmetic progression to catch hold of infinitely many primes and to show that $h$ satisfies the above two conditions for those primes. To achieve this, we now briefly discuss the square-free values of $h$ where the arguments are primes in an arithmetic progression. This is kindly shared with us by Prof. H. Pasten through \cite{pasten-private}, which is broadly based on an earlier work by him \cite{pasten-ijnt}. We furnish the outline of his method for the polynomial $h$.

\smallskip

For the sake of brevity, we shall assume that $p$ and $q$ stand for prime numbers in rest of the discussion in this section. For a pair of relatively prime positive integers $a$ and $m$ where $m$ is square-free, a large positive real number $X$ and the quadratic polynomial $h(X) = X^{2} + 3X + 9$, let $$S_{h}(X) = \{p \leq X : p \equiv a \pmod {m} \mbox{ and } f(p) \mbox{ is square-free} \}.$$ We denote the cardinality of the set $S_{h}(X)$ by $N_{h}(X;a,m)$. For a fixed small real number $\tau > 0$, let $y = \frac{\log X}{100}$ and let $z = X^{1 - \tau}$. We define the following sets similar to what has been defined in \cite{pasten-ijnt}. $$Q = \{p \leq X : p \equiv a \pmod {m} \mbox{ and } f(p) \not\equiv 0 \pmod {q^{2}} \mbox{ for all } q \leq y\},$$ $$R = \{p \leq X : q^{2} \mid f(p) \mbox{ for some } q \mbox{ with } y < q \leq z\},$$ $$S = \{p \leq X : q^{2} \mid f(p) \mbox{ for some } q \mbox{ with } q > z\}.$$ It is clear that $|Q| \geq N_{h}(X;a,m)$. Moreover, if $p \in Q$, then either $f(p)$ is square-free or $f(p)$ is divisible by the square of some prime number that lies either in $(y,z]$ or in $(z,\infty)$. In other words, $p \in S_{h}(X) \cup R \cup S$. It then follows that $|Q| \leq N_{h}(X;a,m) + |R| + |S|$. Consequently, $|Q| \geq N_{h}(X;a,m) \geq |Q| - |R| - |S|$. By \cite[Lemma 2.6]{pasten-ijnt}, we have $|R| = o\left(\frac{X}{\log X}\right)$ and an argument similar to (1.5) in \cite{fried} yields $|S| = o\left(\frac{X}{\log X}\right)$. Thus to derive an estimate for $N_{h}(X;a,m)$ amounts to derive an estimate for $|Q|$.

\smallskip

Let $\rho(n)$ denote the cardinality of the set $\{b \in (\mathbb{Z}/n\mathbb{Z})^{*} : h(b) \equiv 0 \pmod {n} \mbox{ and } b \equiv a \pmod {\gcd(m,n)}\}$. By Chinese remainder theorem, we observe that $\rho(n)$ is a multiplicative function of $n$. Moreover, an application of Hensel's lemma (cf. \cite[Lemma 5.2]{murty-pasten}) yields that $\rho(q^{2}) \leq 2$ for all prime $q \neq 3$ as $3$ is the only prime divisor of the discriminant of $h$ and it turns out by a direct computation that $\rho(9) = 0$.  

\smallskip

Now, we define the Euler product $c_{h}(m,a) := \displaystyle\prod_{q}\left(1 - \frac{\rho(q^{2})\phi(\gcd(m,q^{2}))}{\phi(q^{2})}\right)$. Since $m$ is assumed to be square-free, we see that $\gcd(m,q^{2}) = 1$ or $q$ and the gcd equals $q$ precisely for those primes $q$ that divides $m$. Therefore, for all but finitely many primes $q$, we see that each term of the Euler product is $1 - \frac{\rho(q^{2})}{\phi(q^{2})}$. Using $\rho(q^{2}) \leq 2$, we conclude that $c_{h}(m,a)$ is convergent. Moreover, since each term in the Euler product is non-zero, we conclude that $c_{h}(m,a) \neq 0$. Now, an argument similar to the one used in \cite{pasten-ijnt} gives that $|Q| = \frac{c_{h}(m,a)}{\phi(m)} \cdot \frac{X}{\log X} + o\left(\frac{X}{\log X}\right)$ and therefore, we have 
\begin{equation}\label{pasten-pasten-pasten}
N_{h}(X;a,m) = \frac{c_{h}(m,a)}{\phi(m)} \cdot \frac{X}{\log X} + o\left(\frac{X}{\log X}\right).
\end{equation} In particular, we can conclude that $S_{h}(X)$ is an infinite set because $c_{h}(m,a) > 0$.

\section{Proof of Theorem \ref{bq-dji}}

Let $p$ and $q$ be odd prime numbers with $p = 2q + 1$ so that $p \equiv 3 \pmod {4}$. Let $t \geq 3$ be an odd integer and let $r_{1},\ldots,r_{t}$ be primes numbers satisfying the hypotheses of Lemma \ref{main-CRT-lemma}. Let $m = r_{1}\cdots r_{t}$ and $K_{m,p} := \mathbb{Q}(\sqrt{m},\sqrt{p})$. Then the quadratic subfields of the bi-quadratic field $K_{m,p}$ are $K_{m} := \mathbb{Q}(\sqrt{m})$, $K_{p} := \mathbb{Q}(\sqrt{p})$ and $K_{mp} := \mathbb{Q}(\sqrt{mp})$. We note that the ramified primes in $K/\mathbb{Q}$ are precisely $2,p,r_{1},\ldots,r_{t}$, each with ramification index $2$. Therefore, by Theorem \ref{zmt}, we have $Po(K_{m,p}) \simeq \displaystyle\bigoplus_{i = 1}^{t + 2}\mathbb{Z}/2\mathbb{Z}\Bigg/ H^{1}(G,\mathcal{O}_{K_{m,p}}^{*})$. We also note that since the rational prime $2$ is not totally ramified in $K/\mathbb{Q}$, therefore by Lemma \ref{set-zer}, the cohomology group is equal to its $2$-torsion subgroup.

\smallskip

In the notations of Lemma \ref{six-lemma}, we have $d_{K_{m}} = m$, $d_{K_{p}} = 4p$ and $d_{K_{mp}} = 4mp$. Therefore, $[d_{K_{m}}]$, $[d_{K_{p}}]$, $[d_{K_{mp}}] \in \langle [p],[mp] \rangle$ in $\mathbb{Q}^{*}/(\mathbb{Q}^{*})^{2}$. Now, by Lemma \ref{trott-lemma}, the fundamental unit of $K_{m}$ has norm $-1$ and therefore by Lemma \ref{six-lemma}, we have $a_{K_{m}} = 1$. Also, by Lemma \ref{raja-lemma}, we conclude that $[a_{K_{p}}] = [2]$ or $[2p]$ in $\mathbb{Q}^{*}/(\mathbb{Q}^{*})^{2}$.

\smallskip

Let $u = z + t\sqrt{mp}$ be a fundamental unit of $K_{mp}$. If $N_{K_{mp}/\mathbb{Q}}(u) = -1$, then we have by Lemma \ref{six-lemma} that $H^{1}(G,\mathcal{O}_{K_{mp}}^{*}) \simeq \langle [2],[m],[p] \rangle$ in $\mathbb{Q}^{*}/(\mathbb{Q}^{*})^{2}$. We prove that this is also the case when the norm is $1$. Thus we now assume that $N_{K_{mp}/\mathbb{Q}}(u) = 1$. That is, $z^{2} - t^{2}mp = 1$.

\smallskip

\noindent
{\bf Case 1.} $z$ is even and $t$ is odd. Then $\gcd(z - 1,z + 1) = 1$ since these are consecutive odd integers. From $(z - 1)(z + 1) = t^{2}mp$, we obtain $z + 1 = \ell_{1}^{2}x$ and $z - 1 = \ell_{2}^{2}y$ for integers $\ell_{1},\ell_{2},x$ and $y$ such that $\ell_{1}\ell_{2} = t$ and $xy = mp = r_{1}\cdots r_{t}p$. Then from this, we obtain $\ell_{1}^{2}x - \ell_{2}^{2}y = 2$ and therefore $$[a_{mp}] = [2(z + 1)] = [2\ell_{1}^{2}x] = [2x] \in \langle [2],[m],[p] \rangle \mbox{ if and only if } x = 1,m,p,mp.$$ We now prove that no other choice of $x$ is admissible. For otherwise, let $x$ comprise of odd number of $r_{i}'$s and let $r_{j}$ be a prime divisor of $y$. From the equation $\ell_{1}^{2}x - \ell_{2}^{2}y = 2$, we have $$1 = \left(\dfrac{2}{r_{j}}\right) = \left(\dfrac{\ell_{1}^{2}x - \ell_{2}^{2}y}{r_{j}}\right) = \left(\dfrac{x}{r_{j}}\right) = -1,$$ a contradiction. Similarly, if $x$ comprises of an even number of $r_{i}'$s, then $y$ comprises of odd number of $r_{i}'$s as $t$ is odd. We choose a a prime divisor $r_{j}$ of $x$ to obtain $$1 = \left(\dfrac{2}{r_{j}}\right) = \left(\dfrac{\ell_{1}^{2}x - \ell_{2}^{2}y}{r_{j}}\right) = \left(\dfrac{-1}{r_{j}}\right)\left(\dfrac{y}{r_{j}}\right) = -1,$$ a contradiction. Consequently, in this case, we have $H^{1}(G,\mathcal{O}_{K_{mp}}^{*}) \simeq \langle [2],[m],[p] \rangle$ in $\mathbb{Q}^{*}/(\mathbb{Q}^{*})^{2}$.

\smallskip

\noindent
{\bf Case 2.} $z$ is odd and $t$ is even. Then from $z^{2} - t^{2}mp = 1$, we obtain $\dfrac{z - 1}{2}\cdot \dfrac{z + 1}{2} = (t/2)^{2}mp$ where $\dfrac{z - 1}{2}$ and $\dfrac{z + 1}{2}$ are relatively prime. Now following the same type of reasoning as in Case 1 yields that $H^{1}(G,\mathcal{O}_{K_{mp}}^{*}) \simeq \langle [2],[m],[p] \rangle$ in $\mathbb{Q}^{*}/(\mathbb{Q}^{*})^{2}$. Therefore, $H^{1}(G,\mathcal{O}_{K_{m,p}}^{*}) \simeq (\mathbb{Z}/2\mathbb{Z})^{3}$ and hence $Po(K_{m,p}) \simeq \displaystyle\bigoplus_{i = 1}^{t - 1}\mathbb{Z}/2\mathbb{Z}$.

\smallskip

Working with the bi-quadratic field $K_{m,p-1} = K_{m,2q}$, we follow the same line of argument used for $K_{m,p}$ to obtain $Po(K_{m,p - 1}) \simeq \displaystyle\bigoplus_{i = 1}^{t - 1}\mathbb{Z}/2\mathbb{Z}$. Since $t \geq 3$ is an arbitrary odd integer, the proof of Theorem \ref{bq-dji} is thus complete. $\hfill\Box$

\section{Proof of Theorem \ref{main-cubic}}

Let $M > 0$ be a real number and let $t \geq 2$ be an integer such that $3^{t - 1} > M$. Let $p_{1},\ldots,p_{t}$ be odd prime numbers such that $\left(\frac{-3}{p_{i}}\right) = 1$ and let $a_{i}'$s be integers such that $a_{i}^{2} \equiv -3 \pmod {p_{i}}$ for all $i$. Let $b_{i}'$s be integers such that $2b_{i} \equiv 1 \pmod {p_{i}}$ and we consider the following system of congruences. $$X \equiv -3b_{i} + 3a_{i}b_{i} \pmod {p_{i}} \mbox{ for all } i = 1,\ldots,t.$$ By Chinese remainder theorem, there exists a unique integer $x_{0} \pmod{p_{1}\cdots p_{t}}$ satisfying the above system of congruences. Since $\gcd(x_{0},p_{1}\cdots p_{t}) = 1$, by Dirichlet's theorem for primes in an arithmetic progression, we conclude that there are infinitely many primes $p \equiv x_{0} \pmod {p_{1}\cdots p_{t}}$. For each such prime $p$, we see that $$h(p) = p^{2} + 3p + 9 \equiv x_{0}^{2} + 3x_{0} + 9 \equiv 0 \pmod {p_{i}} \mbox{ for each } i.$$ Also, by \eqref{pasten-pasten-pasten}, we conclude that $h(p)$ is square-free for infinitely many such $p'$s. Therefore, the discriminant of the simplest cubic field $K_{p}$ is given by $(p^{2} + 3p + 9)^{2}$ and is divisible by the primes $p_{1},\ldots,p_{t}$. In other words, the primes $p_{1},\ldots,p_{t}$ are all ramified in $K_{p}/\mathbb{Q}$ with ramification index $3$. Consequently, by Proposition \ref{cubic-wala-proposition}, we conclude that $|Po(K_{p})| \geq 3^{t - 1} > M$. This completes the proof of Theorem \ref{main-cubic}. $\hfill\Box$

\bigskip

{\bf Acknowledgements.} It is a huge pleasure to thank Prof. H. Pasten for proving the equality mentioned in \eqref{pasten-pasten-pasten}. We gratefully acknowledge his kind efforts to address this issue and sending us his proof \cite{pasten-private}. The research of the first author is funded by CSIR (File no: 09/1026(0036)/2020-EMR-I).

\end{document}